\newcounter{Definitioncount}
\newtheorem{theorem}{Theorem}% with counter
\newtheorem{nugget}[theorem]{Nugget}
\newtheorem{corollary}[theorem]{Corollary}
\theoremstyle{definition}
\newtheoremstyle{fact}{\bigskipamount}{\medskipamount}{\upshape}{}{\itshape}{. }{ }{Fact}
\theoremstyle{fact}
\newtheoremstyle{genquest}{\bigskipamount}{\medskipamount}{\upshape}{}{\itshape}{. }{ }{General Question}
\theoremstyle{genquest}
\newtheoremstyle{step}{2\bigskipamount}{\medskipamount}{\upshape}{}{\itshape}{. }{ }{\underline{Step~\thestep}}
\theoremstyle{step}
\renewcommand{\thestep}{\arabic{step}}
\newcommand{\lra}{\longrightarrow}
\newcommand{\Lra}{\Longrightarrow}
\newcommand{\ldual}[1]{\mathord{{\let\nolimits\relax\sideset{^\wedge}{}{#1}}}}
\newcommand{\laction}[2]{\mathord{{\let\nolimits\relax\sideset{^{#1}}{}{#2}}}}
\newcommand{\conj}[2]{\mathord{{\let\nolimits\relax\sideset{^{#1}}{}{#2}}}}
\def\CA{{\mathscr A}}
\def\CB{{\mathscr B}}
\def\CC{{\mathscr C}}
\def\CI{{\mathscr I}}
\def\CM{{\mathscr M}}
\def\CN{{\mathscr N}}
\def\CV{{\mathscr V}}
\def\CX{{\mathscr X}}
\begin{document}

\author{Ross Street\footnote{The author gratefully acknowledges the support of Australian Research Council Discovery Grant DP1094883.} \\ 
\small{Mathematics Department, Macquarie University, NSW 2109 Australia} \\
\small{<ross.street@mq.edu.au>}
}

\title{Kan extensions and cartesian monoidal categories}
\date{\small{18 September 2012}}
\maketitle

\noindent {\small{\emph{2010 Mathematics Subject Classification:} 18D10; 18D20; 18A40; 18C10}
\\
{\small{\emph{Key words and phrases:} Lawvere theory; monoidal category; enriched category; pointwise Kan extension.}}

\begin{abstract}
\noindent The existence of adjoints to algebraic functors between categories of models of 
Lawvere theories follows from finite-product-preservingness surviving left Kan extension.
A result along these lines was proved in Appendix 2 of Brian Day's PhD thesis \cite{DayPhD}.
His context was categories enriched in a cartesian closed base.
A generalization is described here with essentially the same proof.
We introduce the notion of cartesian monoidal category in the enriched context.
With an advanced viewpoint, we give a result about left extension along a 
promonoidal module and further related results.  
\end{abstract}

\tableofcontents

\section{Introduction}\label{Intro}

The pointwise left Kan extension, along any functor between categories with finite products,
of a finite-product-preserving functor into a cartesian closed category is finite-product-preserving.  
This kind of result goes back at least to Bill Lawvere's thesis \cite{LawverePhD} 
and some 1966 ETH notes of Fritz Ulmer. 
Eduardo Dubuc and the author independently provided Saunders Mac Lane with a proof along
the lines of the present note at Bowdoin College in the Northern Hemisphere Summer of 1969.
Brian Day's thesis \cite{DayPhD} gave a generalization to categories enriched in a cartesian closed base. Also see Kelly-Lack \cite{KelLack1993} and Day-Street \cite{DaySt1995}. 
Our purpose here is to remove the restriction on the base and, to some extent, the finite products.

\section{Weighted colimits}\label{Wc}

We work with a monoidal category $\CV$ as used in Max Kelly's book \cite{KellyBook} as a base for enriched category theory. 

Recall that the {\em colimit} of a $\CV$-functor $F:\CA \lra \CX$ {\em weighted by} a $\CV$-functor 
$W:\CA^{\mathrm{op}} \lra \CV$ is an object 
$$\mathrm{colim}(W,F) = \mathrm{colim}_A(WA,FA)$$ of $\CX$ 
equipped with an isomorphism
$$\CX(\mathrm{colim}(W,F),X)\cong [\CA^{\mathrm{op}},\CV ](W,\CX(F,X))$$
$\CV$-natural in $X$.

Independence of naturality in the two variables of two variable naturality, or Fubini's theorem \cite{KellyBook}, has the following expression in terms of weighted colimits.

\begin{nugget} \label{Fubini}  
For $\CV$-functors 
$$W_1:\CA_1^{\mathrm{op}} \lra \CV, \ W_2:\CA_2^{\mathrm{op}} \lra \CV , \ F:\CA_1 \otimes \CA_2 \lra \CX ,$$  
if $\mathrm{colim}(W_2,F(A,-))$ exists for each $A\in \CA$ then
$$\mathrm{colim}(W_1,\mathrm{colim}(W_2,F))\cong \mathrm{colim}(W_1\otimes W_2,F) \ .$$
Here the isomorphism is intended to include the fact that one side exists if and only if the other does. Also $(W_1\otimes W_2)(A,B)=W_1A\otimes W_2B$.  
\end{nugget}  
\begin{proof}  Here is the calculation:
\begin{eqnarray*}
\CX(\mathrm{colim}(W_1\otimes W_2,F),X) & \cong & [(\CA_1 \otimes \CA_2)^{\mathrm{op}},\CV ](W_1\otimes W_2 ,\CX (F,X))\\
& \cong & [\CA_1^{\mathrm{op}},\CV ](W_1, [\CA_2^{\mathrm{op}},\CV ](W_2 ,\CX (F,X)))\\
& \cong & [\CA_1^{\mathrm{op}},\CV ](W_1, \CX(\mathrm{colim}(W_2 ,F),X)))\\
& \cong & \CX(\mathrm{colim}(W_1,\mathrm{colim}(W_2,F)),X) \ .
\end{eqnarray*}
\end{proof}

Here is an aspect of the calculus of mates expressed in terms of weighted colimits. 
Note that $S\dashv T:\CA \lra \CC$ means $T^{\mathrm{op}}\dashv S^{\mathrm{op}}:\CA^{\mathrm{op}} \lra \CC^{\mathrm{op}}$. 

\begin{nugget}\label{mates}
For $\CV$-functors $W:\CA^{\mathrm{op}} \lra \CV$, $G:\CC \lra \CX$, and a $\CV$-adjunction
$$S\dashv T:\CA \lra \CC,$$
there is an isomorphism $$\mathrm{colim}(WS^{\mathrm{op}},G) \cong \mathrm{colim}(W,GT) \ . $$  
\end{nugget}
\begin{proof} Here is the calculation: 
\begin{eqnarray*}
\CX(\mathrm{colim}(W,GT),X) & \cong &  [\CA^{\mathrm{op}},\CV ](W,\CX (GT,X))\\
& \cong &  [\CA^{\mathrm{op}},\CV ](W,\CX (G,X)T^{\mathrm{op}})\\
& \cong &  [\CC^{\mathrm{op}},\CV ](WS^{\mathrm{op}},\CX (G,X))\\
& \cong &  \CX(\mathrm{colim}(WS^{\mathrm{op}},G),X) \ .
\end{eqnarray*}
\end{proof}

Recall that a {\em pointwise left Kan extension} of a $\CV$-functor $F:\CA \lra \CX$ along a
$\CV$-functor $J:\CA \lra \CB$ is a $\CV$-functor $K = \mathrm{Lan}_J(F) :\CB \lra \CX$
such that there is a $\CV$-natural isomorphism
$$KB\cong \mathrm{colim}_A(\CB(JA,B),FA) \ .$$ 

\section{Cartesian monoidal enriched categories}\label{Cec}

A monoidal $\CV$-category $\CA$ will be called {\em cartesian} when the tensor product and unit object have left adjoints. 
That is, $\CA$ is a map pseudomonoid in the monoidal 2-category $\CV\text{-}\mathrm{Cat}^{\mathrm{co}}$ in the sense of \cite{DMcCS}.   

Let us denote the tensor product of $\CA$ by $-\star - : \CA \otimes \CA \lra \CA$ with left adjoint 
$\Delta : \CA \lra \CA \otimes \CA$ and the unit by $N : \CI \lra \CA$ with left adjoint $E : \CA \lra \CI$. 
(Here $\CI$ is the unit $\CV$-category: it has one object $0$ and $\CI (0,0)=I$.)
It is clear that these right adjoints make $\CA$ a comonoidal $\CV$-category; that is, a pseudomonoid in
$\CV\text{-}\mathrm{Cat}^{\mathrm{op}}$. 
Since $\mathrm{ob} : \CV\text{-}\mathrm{Cat} \lra \mathrm{Set}$ is monoidal, we see that $\Delta : \CA \lra \CA \otimes \CA$ is given by the diagonal on objects.  We have
$$\CA (A,A_1\star A_2) \cong \CA (A,A_1)\otimes \CA (A,A_2) \ ,$$ 
where $\CV$-functoriality in $A$ on the right-hand side uses $\Delta$. 

If $\CA$ is cartesian, the $\CV$-functor category $[\CA,\CV]$ becomes monoidal under convolution using the comonoidal structure on $\CA$. 
This is a pointwise tensor product in the sense that, on objects, it is defined by: $$(M\ast N)A = MA\otimes NA \ .$$ 
On morphisms it requires the use of $\Delta$.
Indeed, the Yoneda embedding
$$\mathrm{Y} : \CA^{\mathrm{op}} \lra [\CA,\CV]$$
is strong monoidal.  

\section{Main result}\label{Mr}

\begin{theorem}
Suppose $J:\CA \lra \CB$ is a $\CV$-functor between cartesian monoidal $\CV$-categories.
Assume also that $J$ is strong comonoidal.
Suppose $\CX$ is a monoidal $\CV$-category such that each of the $\CV$-functors $-\otimes X$ and
$X\otimes -$ preserves colimits. Assume the $\CV$-functor $F:\CA \lra \CX$ is strong monoidal.
If the pointwise left Kan extension $K:\CB \lra \CX$ of $F$ along $J$ exists then $K$ too is strong monoidal.
\end{theorem}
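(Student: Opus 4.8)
The plan is to deduce everything from the defining isomorphism $KB\cong\mathrm{colim}_A(\CB(JA,B),FA)$, producing first a canonical isomorphism $KB_1\otimes KB_2\cong K(B_1\star B_2)$, then a canonical isomorphism identifying $K$ applied to the unit object of $\CB$ with the unit object of $\CX$, and finally checking that these two furnish a strong monoidal structure on $K$.

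For the binary constraint, I would move the tensor product of $\CX$ inside the colimit. Because $-\otimes X$ and $X\otimes-$ preserve colimits, doing this once in each variable and then applying Fubini (Nugget~\ref{Fubini}) yields
$$KB_1\otimes KB_2\ \cong\ \mathrm{colim}_{(A_1,A_2)}\bigl(\CB(JA_1,B_1)\otimes\CB(JA_2,B_2),\ FA_1\otimes FA_2\bigr)\,.$$
Strong monoidality of $F$ replaces the diagram $FA_1\otimes FA_2$ with $F(A_1\star A_2)=(F\circ{\star})(A_1,A_2)$, and then the mates isomorphism (Nugget~\ref{mates}) for the $\CV$-adjunction $\Delta\dashv{\star}\colon\CA\otimes\CA\lra\CA$, taken with $G=F$, collapses the double colimit to
$$KB_1\otimes KB_2\ \cong\ \mathrm{colim}_A\bigl(\CB(JA,B_1)\otimes\CB(JA,B_2),\ FA\bigr)\,,$$
where the $\CV$-functoriality of the weight in $A$ now factors through the diagonal $\Delta_\CA$.

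Next I would identify this last weight with $\CB(J-,B_1\star B_2)$. Cartesianness of $\CB$ provides $\CB(-,B_1\star B_2)\cong\CB(-,B_1)\otimes\CB(-,B_2)$ with $\CV$-functoriality through $\Delta_\CB$ (Section~\ref{Cec}); precomposing with $J$ and using that $J$ is strong comonoidal, i.e.\ $\Delta_\CB\circ J\cong(J\otimes J)\circ\Delta_\CA$, turns this into the weight $\CB(J-,B_1)\otimes\CB(J-,B_2)$ with $\CV$-functoriality through $\Delta_\CA$. Hence $KB_1\otimes KB_2\cong\mathrm{colim}_A(\CB(JA,B_1\star B_2),FA)\cong K(B_1\star B_2)$. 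For the unit, cartesianness of $\CB$ ($E_\CB\dashv N_\CB$) together with $E_\CB\circ J\cong E_\CA$ identifies the weight $\CB(J-,N)$ with $\CI(E_\CA-,0)$; the mates isomorphism for $E_\CA\dashv N_\CA\colon\CI\lra\CA$ then rewrites $K$ of the unit object as a colimit over $\CI$ weighted by a representable, which is $F$ of the unit object of $\CA$, hence the unit object of $\CX$ since $F$ is strong monoidal.

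It remains to promote these pointwise isomorphisms to a strong monoidal structure on $K$. Each of the displayed isomorphisms is $\CV$-natural in $B_1$ and $B_2$ (every ingredient is), so the binary comparison is a genuine $\CV$-natural transformation $KB_1\otimes KB_2\Rightarrow K(B_1\star B_2)$; moreover it is characterised, via the universal properties defining the colimits, as the unique such transformation compatible with the constraints of $F$ and of $J$. Consequently the pentagon and triangle axioms for $K$ follow from those for $F$ and the coherence of the comonoidal $J$ by that uniqueness, using once more that tensoring in $\CX$ preserves colimits. I expect this coherence bookkeeping to be the only substantial work, the genuinely delicate point throughout being to keep track of which diagonal — $\Delta_\CA$ or $\Delta_\CB$ — each instance of $\CV$-functoriality passes through; this is exactly the care taken in Day's original argument.
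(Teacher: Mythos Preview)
Your argument is correct and is essentially the paper's own proof: the same chain of isomorphisms via colimit-preservation of the tensor, Fubini (Nugget~\ref{Fubini}), strong monoidality of $F$, mates (Nugget~\ref{mates}) for $\Delta\dashv\star$, and the cartesian isomorphism in $\CB$. You are in fact more explicit than the paper about where the strong-comonoidality of $J$ enters (identifying $(J\otimes J)\Delta_{\CA}$ with $\Delta_{\CB}J$ and $E_{\CB}J$ with $E_{\CA}$) and about the coherence bookkeeping, which the paper simply omits.
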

\begin{proof} 
Using that tensor in $\CX$ preserves colimits in each variable, the Fubini Theorem~\ref{Fubini}, that $F$ is strong monoidal, Theorem~\ref{mates} with the cartesian property of $\CA$, and the cartesian property of $\CB$, we have the calculation:  
\begin{eqnarray*}
KB_1\otimes KB_2 & \cong & \mathrm{colim}_{A_1}(\CB(JA_1,B_1),FA_1)\otimes \mathrm{colim}_{A_2}(\CB(JA_2,B_2),FA_2) \\
& \cong & \mathrm{colim}_{A_1, A_2}(\CB(JA_1,B_1)\otimes \CB(JA_2,B_2),FA_1\otimes FA_2) \\  
& \cong & \mathrm{colim}_{A_1, A_2}(\CB(JA_1,B_1)\otimes \CB(JA_2,B_2),F(A_1\star A_2)) \\  
& \cong & \mathrm{colim}_{A}(\CB(JA,B_1)\otimes \CB(JA,B_2),FA) \\    
& \cong & \mathrm{colim}_{A}(\CB(JA,B_1\star B_2),FA) \\ 
& \cong & K(B_1 \star B_2) \ .  
\end{eqnarray*}

For the unit part, for similar reasons, we have:
\begin{eqnarray*}
N & \cong & FN0 \\
& \cong & \mathrm{colim}_0(\CI(0,0),FN0) \\  
& \cong & \mathrm{colim}_{A}(\CI(EA,0),FA) \\    
& \cong &  \mathrm{colim}_{A}(\CI(EJA,0),FA) \\ 
& \cong & KN \ .  
\end{eqnarray*}
\end{proof}

\section{An advanced viewpoint}\label{Aav}

In terminology of \cite{DaySt1997}, suppose $H:\CM \lra \CN$ is a monoidal pseudofunctor 
between monoidal bicategories. 
The main point to stress here is that the constraints 
$$\Phi_{A,B} : HA\otimes HB \lra H(A\otimes B)$$
are pseudonatural in $A$ and $B$.
Then we see that $H$ takes pseudomonoids (= monoidales) to pseudomonoids, 
lax morphisms of pseudomonoids to lax morphisms, 
oplax morphisms of pseudomonoids to oplax morphisms, and
strong morphisms of pseudomonoids to strong morphisms.

In particular, this applies to the monoidal pseudofunctor
$$\CV \text{-}\mathrm{Mod}(-,\CI) : \CV \text{-}\mathrm{Mod}^{\mathrm{op}} \lra \CV \text{-}\mathrm{CAT}$$ 
which takes the $\CV$-category $\CA$ to the $\CV$-functor $\CV$-category $[\CA,\CV]$.
Now pseudomonoids in $\CV \text{-}\mathrm{Mod}^{\mathrm{op}}$ are precisely promonoidal 
(= premonoidal) $\CV$-categories in the sense of Day \cite{DayPhD, DayConv}.  
Therefore, for each promonoidal $\CV$-category $\CA$, we obtain a monoidal $\CV$-category
$$\CV \text{-}\mathrm{Mod}(\CA,\CI) = [\CA,\CV]$$
which is none other than what is now called Day convolution since it is defined and analysed in
\cite{DayPhD, DayConv}.  

A lax morphism of pseudomonoids in $\CV \text{-}\mathrm{Mod}^{\mathrm{op}}$, as written in $\CV \text{-}\mathrm{Mod}$, is a module $K : \CB \lra \CA$ equipped with module morphisms 

$$\xymatrix{
\CB \ar[d]_{K}^(0.5){\phantom{AAA}}="1" \ar[rr]^{P}  && \CB \otimes \CB \ar[d]^{K\otimes K}_(0.5){\phantom{AAA}}="2" \ar@{<=}"1";"2"^-{\phi}
\\
\CA \ar[rr]_-{P} && \CA \otimes \CA 
}
\qquad
\xymatrix{
\CB \ar[rd]_{J}^(0.5){\phantom{a}}="1" \ar[rr]^{K}  && \CA \ar[ld]^{J}_(0.5){\phantom{a}}="2" \ar@{=>}"1";"2"^-{\phi_0}
\\
& \CI 
}
$$
satisfying appropriate conditions.
In other words, we have 
 \begin{eqnarray*}
& \phi_{A_1,A_2,B}  :  \mathrm{colim}_{B_1,B_2} (K(A_1,B_1)\otimes K(A_2,B_2),P(B_1,B_2,B)) \\
 & \Lra   \mathrm{colim}_A (K(A,B),P(A_1,A_2,A))
 \end{eqnarray*}
 and 
 \begin{eqnarray*}
 \phi_{0 B} :  JB \Lra \mathrm{colim}_A (K(A,B), JA) \ .
 \end{eqnarray*}
 We call such a $K$ a {\em promonoidal module}. It is {\em strong} when $\phi$ and $\phi_0$
 are invertible.
 
 We also have the $\CV$-functor 
 $$\exists_K : [\CA,\CX] \lra [\CB,\CX]$$
 defined by
 $$(\exists_K)B=\mathrm{colim}_A (K(A,B),FA) \ .$$
 
 By the general considerations on monoidal pseudofunctors, $\exists_K$ is a monoidal 
 $\CV$-functor when $\CX = \CV$.
 However, the same calculations needed to show this explicitly show that it works for any monoidal 
 $\CV$-category $\CX$ for which each of the tensors 
 $X\otimes -$ and $-\otimes X$ preserves colimits. 
 
 \begin{theorem}\label{HiT} If $K:\CB \lra \CA$ is a promonoidal $\CV$-module then $\exists_K : [\CA,\CX] \lra [\CB,\CX]$ is a monoidal $\CV$-functor.  If $K$ is strong promonoidal then $\exists_K$ is strong monoidal.  
 \end{theorem}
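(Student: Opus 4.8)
The plan is to split off the case $\CX = \CV$, which the discussion preceding the statement has in effect already settled, and then to show that a general monoidal $\CX$ for which $X\otimes-$ and $-\otimes X$ are cocontinuous is handled by literally the same calculation. For $\CX=\CV$ there is nothing to do: the monoidal pseudofunctor $\CV\text{-}\mathrm{Mod}(-,\CI):\CV\text{-}\mathrm{Mod}^{\mathrm{op}}\lra\CV\text{-}\mathrm{CAT}$ sends the lax morphism of pseudomonoids $(K,\phi,\phi_0)$ in $\CV\text{-}\mathrm{Mod}^{\mathrm{op}}$ to a lax morphism of pseudomonoids in $\CV\text{-}\mathrm{CAT}$, that is, to a monoidal $\CV$-functor, and that functor is $\exists_K$ equipped with the convolution structures on source and target; a strong morphism is sent to a strong morphism. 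So the content of the general case is to carry out the corresponding computation with $\CV$ replaced by $\CX$ wherever a value is taken.

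For that I would first recall the $\CX$-valued Day convolution on $[\CA,\CX]$ and on $[\CB,\CX]$: the tensor is $(F\ast G)A=\mathrm{colim}_{A_1,A_2}(P(A_1,A_2,A),FA_1\otimes GA_2)$ built from the promonoidal structure $P$ that $\CA$ carries as a pseudomonoid in $\CV\text{-}\mathrm{Mod}^{\mathrm{op}}$, and the unit is $A\mapsto JA\cdot I$, the copower of the monoidal unit $I$ of $\CX$ with the promonoidal unit $J$ of $\CA$ --- all the structure isomorphisms surviving the passage from $\CV$ to $\CX$ precisely because $X\otimes-$, $-\otimes X$ and the copowers $V\cdot-$ of $\CX$ are cocontinuous. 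The lax constraint is then built as follows. Evaluating $\exists_K F\ast\exists_K G$ at $B$, substituting $(\exists_K F)B_i=\mathrm{colim}_{A_i}(K(A_i,B_i),FA_i)$, and using the cocontinuity of the tensor of $\CX$ in each variable together with the Fubini Nugget~\ref{Fubini}, one rewrites the result as
$$\mathrm{colim}_{A_1,A_2}\Big(\mathrm{colim}_{B_1,B_2}\big(K(A_1,-)\otimes K(A_2,-),P(B_1,B_2,B)\big),\,FA_1\otimes GA_2\Big),$$
in which the inner weighted colimit is exactly the domain of $\phi_{A_1,A_2,B}$. Applying $\phi_{A_1,A_2,B}$ in the weight slot of $\mathrm{colim}(-,FA_1\otimes GA_2)$, and then using Fubini and cocontinuity once more to pull the resulting $\mathrm{colim}_A$ outside, lands on $\mathrm{colim}_A(K(A,B),(F\ast G)A)=\exists_K(F\ast G)B$. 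The unit constraint is obtained in the same fashion by copowering $\phi_{0B}:JB\Lra\mathrm{colim}_A(K(A,B),JA)$ with $I$ and commuting the copower past the colimit. If $\phi$ and $\phi_0$ are invertible then every step above is an isomorphism, which gives the strong case.

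The main obstacle is coherence: verifying the associativity axiom and the two unit axioms for $(\exists_K,\bar\phi,\bar\phi_0)$ to be a monoidal $\CV$-functor. I expect no new idea to be needed here --- the two axioms making $(K,\phi,\phi_0)$ a lax morphism of pseudomonoids in $\CV\text{-}\mathrm{Mod}^{\mathrm{op}}$ translate, via exactly the calculus of weighted colimits just used (Fubini reassociations, cocontinuity of the tensor and of copowers, and the pseudonaturality of the promonoidal associativity and unit data of $\CA$ and $\CB$), term by term into the required diagrams, and for $\CX=\CV$ this translation is precisely the assertion that a monoidal pseudofunctor preserves lax morphisms. The work is therefore bookkeeping: keeping straight at each stage which of the two promonoidal structures $P_\CA,P_\CB$ and which of the two units is in play, and confirming that every rewriting is licensed solely by the cocontinuity hypotheses imposed on $\CX$ and never by anything particular to $\CV$.
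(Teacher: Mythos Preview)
Your proposal is correct and follows essentially the same route as the paper: a direct calculation unfolding the convolution, using Fubini and cocontinuity of the tensor in $\CX$ to collect the weights, applying $\phi_{A_1,A_2,B}$ (and $\phi_{0B}$ for the unit), and then repacking. The paper is terser and does not separately treat $\CX=\CV$ or dwell on the coherence axioms, but the substance is the same.
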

 \begin{proof} Although the result should be expected from our earlier remarks, here is a direct calculation.
 \begin{eqnarray*}
(\exists_KF_1\ast \exists_KF_2)B & \cong & \mathrm{colim}_{B_1,B_2}(P(B_1,B_2,B),(\exists_KF_1)B_1\otimes(\exists_KF_2)B_2) \\
& \cong & \mathrm{colim}_{B_1,B_2}(P(B_1,B_2,B), \mathrm{colim}_{A_1}(K(A_1,B_1), F_1A_1)\otimes \\
& &\mathrm{colim}_{A_2}(K(A_2,B_2), F_2A_2)) \\  
& \cong & \mathrm{colim}_{B_1,B_2,A_1,A_2}(K(A_1,B_1)\otimes K(A_2,B_2)\otimes P(B_1,B_2,B), \\
& & F_1A_1 \otimes F_2A_2) \\    
& \Lra &  \mathrm{colim}_{A,A_1,A_2}(K(A,B)\otimes P(A_1,A_2,A),F_1A_1\otimes F_2A_2) \\ 
& \cong & \mathrm{colim}_A(K(A,B), \mathrm{colim}_{A_1,A_2}(P(A_1,A_2,A),F_1A_1\otimes F_2A_2)) \\
& \cong & \mathrm{colim}_A(K(A,B), (F_1\ast F_2)A)) \\
& \cong & \exists_K(F_1 \ast F_2)B \ .
\end{eqnarray*}
The morphism on the fourth line of the calculation is induced by $\phi_{A_1,A_2,B}$ and so is invertible if $K$ is strong promonoidal. We also have $ \phi_{0 B} :  JB \Lra (\exists_KJ)B$. 
\end{proof}
 
 For the corollaries now coming, assume as above that $\CX$ is a monoidal $\CV$-category such that 
 $X\otimes -$ and $-\otimes X$ preserve existing colimits. 
 Also $\CA$ and $\CB$ are monoidal $\CV$-categories. 
 The monoidal structure on $[\CA^{\mathrm{op}},\CX]$ is convolution with respect to the
 promonoidal structure $\CA (A,A_1\star A_2)$ on $\CA^{\mathrm{op}}$; 
 similarly for $[\CB^{\mathrm{op}},\CX]$.
 
 \begin{corollary} If $J:\CA \lra \CB$ is strong monoidal then so is 
 $$\mathrm{Lan}_{J^{\mathrm{op}}} :[\CA^{\mathrm{op}},\CX]\lra [\CB^{\mathrm{op}},\CX] \ .$$
 \end{corollary}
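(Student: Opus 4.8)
The plan is to present $\mathrm{Lan}_{J^{\mathrm{op}}}$ as an instance of the $\CV$-functor $\exists_K$ of Theorem~\ref{HiT} and then quote that theorem. Take $K:\CB^{\mathrm{op}} \lra \CA^{\mathrm{op}}$ to be the $\CV$-module represented by $J^{\mathrm{op}}$, so that on objects $K(A,B)=\CB^{\mathrm{op}}(J^{\mathrm{op}}A,B)=\CB(B,JA)$. With this choice the defining formula for the pointwise left Kan extension becomes
$$\mathrm{Lan}_{J^{\mathrm{op}}}(F)B \cong \mathrm{colim}_A(\CB^{\mathrm{op}}(J^{\mathrm{op}}A,B),FA)=\mathrm{colim}_A(K(A,B),FA)=(\exists_K F)B \ ,$$
so that $\mathrm{Lan}_{J^{\mathrm{op}}}=\exists_K$ (on those $F$ for which these colimits exist, i.e. exactly where the pointwise $\mathrm{Lan}_{J^{\mathrm{op}}}$ exists; $\exists_K$ is in any case defined under the standing hypothesis on $\CX$). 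It remains to check that $K$ is a strong promonoidal module for the representable promonoidal structures $\CA(A,A_1\star A_2)$ on $\CA^{\mathrm{op}}$ and $\CB(B,B_1\star B_2)$ on $\CB^{\mathrm{op}}$ --- which are precisely the promonoidal structures whose convolutions give the monoidal structures on $[\CA^{\mathrm{op}},\CX]$ and $[\CB^{\mathrm{op}},\CX]$ appearing in the statement.

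That $K$ is strong promonoidal when $J$ is strong monoidal is the expected fact. Conceptually it comes from the observation of the previous section: passing a $\CV$-functor to the module it represents underlies a monoidal pseudofunctor, so it sends a monoidal $\CV$-category to the corresponding promonoidal $\CV$-category with representable promonoidal structure, and a strong monoidal $\CV$-functor to a strong morphism of the resulting pseudomonoids, that is, to a strong promonoidal module. Applying this to the strong monoidal $\CV$-functor $J^{\mathrm{op}}:\CA^{\mathrm{op}}\lra\CB^{\mathrm{op}}$ yields our $K$ with invertible structure constraints $\phi$ and $\phi_0$, and Theorem~\ref{HiT} then finishes the proof.

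In the spirit of the proof of Theorem~\ref{HiT}, one may instead check invertibility of $\phi$ and $\phi_0$ by a direct calculation. The source of $\phi_{A_1,A_2,B}$ is
$$\mathrm{colim}_{B_1,B_2}\!\big(\CB(B_1,JA_1)\otimes\CB(B_2,JA_2),\ \CB(B,B_1\star B_2)\big) \ ,$$
and since the weights $\CB(B_i,JA_i)$ are representable, the co-Yoneda lemma collapses the colimit over $B_1$ and then over $B_2$, giving $\CB(B,JA_1\star JA_2)$; the target $\mathrm{colim}_A(\CB(B,JA),\CA(A,A_1\star A_2))$ collapses, by co-Yoneda in the representable factor $\CA(A,A_1\star A_2)$, to $\CB(B,J(A_1\star A_2))$. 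Under these identifications $\phi_{A_1,A_2,B}$ is $\CB(B,-)$ applied to the monoidal constraint $JA_1\star JA_2\xrightarrow{\ \sim\ }J(A_1\star A_2)$ of $J$, hence invertible, and $\phi_{0B}$ is likewise $\CB(B,-)$ applied to the unit constraint $I_{\CB}\xrightarrow{\ \sim\ }JI_{\CA}$, hence invertible. The coherence axioms required of $(K,\phi,\phi_0)$ reduce to those satisfied by the constraints of $J$.

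The one thing to watch throughout is the bookkeeping of variances: one must be sure that the promonoidal structure relevant on $\CA^{\mathrm{op}}$ is the representable one $\CA(A,A_1\star A_2)$ (so that its convolution is the given monoidal structure on $[\CA^{\mathrm{op}},\CX]$), and that the co-Yoneda collapses above are natural enough in $A_1,A_2,B$ that the comparison really is $\CB(B,-)$ of the constraints of $J$. This, together with the correct identification of $\CV\text{-}\mathrm{CAT}$ inside $\CV\text{-}\mathrm{Mod}$ for the conceptual argument, is the only delicate point; granting it, the corollary is a direct consequence of Theorem~\ref{HiT}. (If $J$ were merely monoidal one would obtain in the same way a lax promonoidal $K$, hence a lax monoidal $\mathrm{Lan}_{J^{\mathrm{op}}}$.)
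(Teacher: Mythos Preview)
Your proof is correct and follows essentially the same route as the paper's: define $K(A,B)=\CB(B,JA)$, identify $\mathrm{Lan}_{J^{\mathrm{op}}}$ with $\exists_K$, verify that $K$ is strong promonoidal via two applications of (co-)Yoneda together with the strong monoidal constraints of $J$, and invoke Theorem~\ref{HiT}. The paper's proof is a two-sentence sketch of exactly this; you have merely spelled out the Yoneda reductions and added the conceptual explanation via the monoidal pseudofunctor $\CV\text{-}\mathrm{Cat}\to\CV\text{-}\mathrm{Mod}$.
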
 
 \begin{proof}
 Apply Theorem~\ref{HiT} to the module $K : \CB^{\mathrm{op}} \lra \CA^{\mathrm{op}}$ defined by
 $K(A,B) = \CB (B,JA)$. We see that $K$ is strong promonoidal using Yoneda twice and strong monoidalness of $J$. 
 \end{proof}
 
 \begin{corollary} If $W:\CA \lra \CV$ is strong monoidal then so is 
$$\mathrm{colim}(W,-) :[\CA^{\mathrm{op}},\CX] \lra \CX \ .$$
 \end{corollary}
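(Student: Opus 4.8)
The plan is to deduce this from Theorem~\ref{HiT} by recognising $\mathrm{colim}(W,-)$ as an instance of $\exists_K$, in the same spirit as the preceding corollary. Take $\CB$ to be the unit $\CV$-category $\CI$ with its trivial promonoidal structure $P(0,0,0)=I$, $J(0)=I$; Day convolution on $[\CI,\CX]\cong\CX$ is then just the given monoidal structure on $\CX$. Take the module $K:\CI\lra\CA^{\mathrm{op}}$ to be $W$ itself, using that a module $\CI\lra\CA^{\mathrm{op}}$ is precisely a $\CV$-functor $\CA\lra\CV$, so that $K(A,0)=WA$. Unwinding definitions,
$$(\exists_K F)\,0 \;=\; \mathrm{colim}_A\big(K(A,0),FA\big)\;=\;\mathrm{colim}_A\big(WA,FA\big)\;=\;\mathrm{colim}(W,F),$$
so that $\exists_K$, read through $[\CI,\CX]\cong\CX$, is exactly $\mathrm{colim}(W,-):[\CA^{\mathrm{op}},\CX]\lra\CX$.

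It then suffices to check that $K$ is a strong promonoidal module with respect to the trivial promonoidal structure on $\CI$ and the promonoidal structure $\CA(A,A_1\star A_2)$ on $\CA^{\mathrm{op}}$ from the corollary's preamble. Spelling out the required cells $\phi_{A_1,A_2,0}$ and $\phi_{0,0}$ and using the co-Yoneda lemma to evaluate
$$\mathrm{colim}_A\big(\CA(A,A_1\star A_2),WA\big)\cong W(A_1\star A_2),\qquad \mathrm{colim}_A\big(\CA(A,N0),WA\big)\cong W(N0),$$
one finds that $\phi$ and $\phi_0$ become precisely the strong monoidal constraints $WA_1\otimes WA_2\to W(A_1\star A_2)$ and $I\to W(N0)$ of $W$, hence are invertible; similarly the associativity and unit conditions for $K$ as a promonoidal module unwind, under the same identifications, into the coherence axioms for $W$ as a strong monoidal $\CV$-functor. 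Applying Theorem~\ref{HiT} to $K$ now yields that $\mathrm{colim}(W,-)$ is strong monoidal.

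The one delicate point --- and the step on which I would spend the most care --- is this translation: correctly matching the (op-)variances in the module $K:\CI\lra\CA^{\mathrm{op}}$ and in the promonoidal structure of $\CA^{\mathrm{op}}$ against the data of $W$, and verifying that the coherence conditions packaged into the notion ``promonoidal module'' really do correspond to the monoidal-functor axioms for $W$. Once that dictionary is fixed, the rest is a one-line appeal to Theorem~\ref{HiT}. Alternatively, one can avoid the dictionary altogether by rerunning the computation in the proof of Theorem~\ref{HiT} with $\CB=\CI$ and $P$ trivial: the two outer Fubini/co-Yoneda steps then collapse and the single non-invertible arrow is literally the structure map of $W$, which makes the whole chain explicit.
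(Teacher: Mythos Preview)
Your proposal is correct and follows exactly the paper's approach: the paper's proof is the single sentence ``Take $\CB=\CI$ in Theorem~\ref{HiT}'', and you have simply unpacked that line, identifying the module $K$ with $W$, computing $\exists_K=\mathrm{colim}(W,-)$, and verifying via co-Yoneda that strong promonoidality of $K$ amounts to strong monoidality of $W$. The extra care you flag about matching variances and coherence conditions is real but routine, and your alternative of rerunning the Theorem~\ref{HiT} calculation with $\CB=\CI$ is also fine.
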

 \begin{proof}
 Take $\CB = \CI$ in Theorem~\ref{HiT}. 
 \end{proof}
 
\begin{corollary} \label{Cor3} Suppose $\CA$ is cartesian monoidal. 
If $F:\CA \lra \CX$ is strong monoidal then so is 
$$\mathrm{colim}(-,F) :[\CA^{\mathrm{op}},\CV] \lra \CX \ .$$
 \end{corollary}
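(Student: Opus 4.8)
The plan is to imitate, at the level of weighted colimits, the calculations in the proofs of the Main Theorem (Section~\ref{Mr}) and of Theorem~\ref{HiT}. Recall first that, where it exists, $\mathrm{colim}(-,F)$ is the pointwise left Kan extension of $F$ along the Yoneda embedding $\mathrm{Y}:\CA\lra[\CA^{\mathrm{op}},\CV]$, $A\mapsto\CA(-,A)$; that $\mathrm{Y}$ is strong monoidal for the convolution structure on $[\CA^{\mathrm{op}},\CV]$, by co-Yoneda exactly as in Section~\ref{Cec}; that this convolution has $(W_1\ast W_2)A\cong\mathrm{colim}_{A_1,A_2}(W_1A_1\otimes W_2A_2,\CA(A,A_1\star A_2))$, where the promonoidal structure on $\CA^{\mathrm{op}}$ is $\CA(A,A_1\star A_2)\cong\CA(A,A_1)\otimes\CA(A,A_2)$ with $\CV$-functoriality in $A$ through $\Delta$; and that the unit of this convolution is $\CA(-,N0)=\mathrm{Y}(N0)$, where $N0$ is the unit object of the cartesian monoidal $\CV$-category $\CA$.

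For the multiplicative constraint I would run the following chain, using in turn that the tensors $-\otimes X$ and $X\otimes-$ of $\CX$ preserve colimits, the Fubini Theorem~\ref{Fubini}, and the strong monoidalness of $F$:
\begin{align*}
\mathrm{colim}(W_1,F)\otimes\mathrm{colim}(W_2,F)
&\cong\mathrm{colim}_{A_1,A_2}(W_1A_1\otimes W_2A_2,\;FA_1\otimes FA_2)\\
&\cong\mathrm{colim}_{A_1,A_2}(W_1A_1\otimes W_2A_2,\;F(A_1\star A_2)).
\end{align*}
Starting instead from $\mathrm{colim}(W_1\ast W_2,F)=\mathrm{colim}_A((W_1\ast W_2)A,FA)$, one expands the convolution, pulls the colimit over $A_1,A_2$ past the weighted colimit over $A$ by Fubini (equivalently, by cocontinuity of $\mathrm{colim}(-,F)$ in its weight variable), and then applies the Yoneda isomorphism $\mathrm{colim}_A(\CA(A,A_1\star A_2),FA)\cong F(A_1\star A_2)$; this lands on the same object, so $\mathrm{colim}(-,F)$ carries $\ast$ to $\otimes$. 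All the comparison $2$-cells here are invertible precisely because $F$ is strong monoidal (were $F$ only lax, one would instead obtain a lax monoidal $\mathrm{colim}(-,F)$, with constraint pointing from $\mathrm{colim}(W_1,F)\otimes\mathrm{colim}(W_2,F)$ towards $\mathrm{colim}(W_1\ast W_2,F)$). For the unit, Yoneda together with strong monoidalness of $F$ give
$$\mathrm{colim}(\CA(-,N0),F)=\mathrm{colim}_A(\CA(A,N0),FA)\cong F(N0),$$
which is the unit object of $\CX$.

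What remains is to check that these isomorphisms are $\CV$-natural in $W_1,W_2$ and satisfy the associativity and unit coherence conditions for a monoidal $\CV$-functor; as in the earlier proofs this follows from functoriality of the weighted colimit and of the Fubini isomorphism, and I would not grind through it. The one point that genuinely needs care rather than being automatic is pinning down the convolution monoidal structure on $[\CA^{\mathrm{op}},\CV]$ — in particular its unit $\CA(-,N0)$ and the promonoidal structure $\CA(A,A_1\star A_2)$ on $\CA^{\mathrm{op}}$ — since every Fubini step above silently relies on it; but this is already in place from Sections~\ref{Cec} and~\ref{Aav}. I would also remark that, in contrast to the two preceding corollaries, this statement is not literally an instance of Theorem~\ref{HiT} (there the domain and codomain of $\exists_K$ have the same value $\CV$-category, whereas here they are $\CV$ and $\CX$); equivalently, it is the easy half of Day's universal property exhibiting $[\CA^{\mathrm{op}},\CV]$ with convolution as the free monoidal cocompletion of $\CA$.
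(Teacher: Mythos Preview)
Your proposal is correct and follows essentially the same route as the paper: both arguments reduce to the chain
\[
\mathrm{colim}(W_1,F)\otimes\mathrm{colim}(W_2,F)\;\cong\;\mathrm{colim}_{A_1,A_2}(W_1A_1\otimes W_2A_2,\,F(A_1\star A_2))\;\cong\;\mathrm{colim}(W_1\ast W_2,F)
\]
via Fubini, colimit-preservation of $\otimes$ in $\CX$, and strong monoidalness of $F$. The one difference is in the last isomorphism: the paper observes at once that, because $\CA$ is cartesian, the convolution on $[\CA^{\mathrm{op}},\CV]$ \emph{is} the pointwise tensor $(W_1\otimes W_2)\Delta^{\mathrm{op}}$, and then invokes Nugget~\ref{mates} for the adjunction $\Delta\dashv\star$ to pass directly to $\mathrm{colim}_{A_1,A_2}(W_1A_1\otimes W_2A_2,F(A_1\star A_2))$; you instead keep the general convolution formula, use cocontinuity of $\mathrm{colim}(-,F)$ in the weight, and then Yoneda. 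Both work, but the paper's version is shorter and makes clearer where the cartesian hypothesis enters. Your framing via $\mathrm{Lan}_{\mathrm{Y}}$ and the remarks on laxness and on Theorem~\ref{HiT} are correct but not needed for the argument.
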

 \begin{proof} Here is the calculation for binary tensoring:
 \begin{eqnarray*}
\mathrm{colim}(W_1\otimes W_2,F) & \cong & \mathrm{colim}_A((W_1\otimes W_2)\Delta A,FA) \\
& \cong & \mathrm{colim}_{A_1,A_2}(W_1A_1\otimes W_2A_2,F(A_1\star A_2)) \\ 
& \cong & \mathrm{colim}_{A_1,A_2}(W_1A_1\otimes W_2A_2,FA_1\otimes FA_2)) \\    
& \cong &  \mathrm{colim}_{A_1}(W_1A_1,FA_1) \otimes \mathrm{colim}_{A_2}(W_2A_2,FA_2) \\
& \cong &  \mathrm{colim}(W_1,F) \otimes \mathrm{colim}(W_2,F) \ .
\end{eqnarray*}
The unit preservation is easier.
 \end{proof}
 
 \begin{corollary} Suppose $\CA$ and $\CB$ are cartesian monoidal and $J:\CA \lra \CB$ is strong comonoidal. 
 If $F:\CA \lra \CX$ is strong monoidal then so is $$\mathrm{Lan}_JF :\CB\lra \CX \ .$$
 \end{corollary}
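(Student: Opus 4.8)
The plan is to obtain this corollary from Theorem~\ref{HiT}, so that it reappears as the Main Theorem of Section~\ref{Mr} seen through the advanced viewpoint. First I would take $K:\CB\lra\CA$ to be the $\CV$-module $K(A,B)=\CB(JA,B)$ represented by $J$; the defining $\CV$-natural isomorphism of a pointwise left Kan extension then reads $(\exists_K F)B=\mathrm{colim}_A(\CB(JA,B),FA)\cong(\mathrm{Lan}_JF)B$, so $\exists_K$ and $\mathrm{Lan}_J$ agree as $\CV$-functors $[\CA,\CX]\to[\CB,\CX]$. I would then equip $\CA$ and $\CB$ with the promonoidal structures $\CA(A_1\star A_2,A)$ and $\CB(B_1\star B_2,B)$ built from their cartesian tensors --- the evident mirror of the promonoidal structure on $\CA^{\mathrm{op}}$ used for the earlier Corollaries. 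For these structures, a pseudomonoid in the Day convolution category $[\CA,\CX]$ amounts (via the adjunction defining convolution, together with the Yoneda lemma) to a lax monoidal $\CV$-functor $\CA\to\CX$, and it is a strong monoidal functor exactly when its structure constraints are invertible; likewise for $[\CB,\CX]$. So the hypothesis says that $F$ is such a pseudomonoid with invertible constraints, and --- since Theorem~\ref{HiT} will give that $\exists_K$ is a strong monoidal $\CV$-functor --- the conclusion that $\mathrm{Lan}_JF=\exists_KF$ is strong monoidal will follow.

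The substance is therefore to verify that $K$ is a \emph{strong} promonoidal module relative to these structures. I expect the source of $\phi_{A_1,A_2,B}$ to collapse under the Yoneda lemma --- the weight $\CB(JA_1,-)\otimes\CB(JA_2,-)$ being the representable $(\CB\otimes\CB)\big((JA_1,JA_2),-\big)$ --- to $\CB(JA_1\star JA_2,B)$, and its target to collapse under the co-Yoneda (density) isomorphism for the presheaf $\CB(J-,B)$ on $\CA$ to $\CB\big(J(A_1\star A_2),B\big)$; thus $\phi_{A_1,A_2,B}$ ought to be the morphism induced, by the Yoneda lemma in $B$, by the comonoidal comparison $J(A_1\star A_2)\lra JA_1\star JA_2$. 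For a $\CV$-functor between cartesian monoidal $\CV$-categories this comparison is precisely the product-preservation map, hence invertible exactly because $J$ is strong comonoidal; the unit $2$-cell $\phi_{0B}$ is handled the same way using $J(N0)\lra N0$. This verification is the mirror image --- interchanging $\star$ with $\Delta$, monoidal with comonoidal, and $\CA$ with $\CA^{\mathrm{op}}$ --- of the one in the proof of the first Corollary above, where $K(A,B)=\CB(B,JA)$ is shown to be strong promonoidal ``using Yoneda twice and strong monoidalness of $J$''. With $K$ strong promonoidal, Theorem~\ref{HiT} finishes.

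The main obstacle I anticipate is the bookkeeping at this point: keeping track of which of the several candidate promonoidal structures on $\CA$ and $\CB$ (and hence which $\mathrm{op}$) is in force, and of where the two cartesian hypotheses enter --- namely through the calculus-of-mates Nugget~\ref{mates} for the adjunctions $\Delta\dashv\star$ and $E\dashv N$, and through the isomorphism $\CB(B,B_1)\otimes\CB(B,B_2)\cong\CB(B,B_1\star B_2)$ of Section~\ref{Cec}. Should this become cumbersome, the safe alternative is to bypass the module language entirely and simply rerun the explicit weighted-colimit calculation of the Main Theorem of Section~\ref{Mr}, to which this corollary reduces: its binary half uses cocontinuity of the tensor of $\CX$ in each variable, the Fubini Nugget~\ref{Fubini}, strong monoidalness of $F$, Nugget~\ref{mates} with the cartesian property of $\CA$, and the cartesian property of $\CB$, while its unit half uses in addition the comonoidal constraint $E_\CB J\cong E_\CA$.
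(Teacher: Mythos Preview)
Your primary route via Theorem~\ref{HiT} has a gap at the passage from ``$\exists_K$ is strong monoidal'' to ``$\exists_KF$ is strong monoidal''. It is true that monoids in $[\CA,\CX]$ under Day convolution for $P(A_1,A_2;A)=\CA(A_1\star A_2,A)$ correspond to lax monoidal $\CV$-functors $\CA\to\CX$, and a strong monoidal $\exists_K$ therefore carries lax monoidal $F$ to lax monoidal $\exists_KF$. But the further claim --- that $F$ is \emph{strong} monoidal iff the monoid's ``structure constraints are invertible'' in a sense that $\exists_K$ automatically preserves --- fails. If ``constraints'' means the monoid multiplication $\mu\colon F\ast F\to F$, its invertibility is not equivalent to invertibility of the lax constraints $FA_1\otimes FA_2\to F(A_1\star A_2)$: the former says $F\cong\mathrm{Lan}_{\star}(F\otimes F)$, which is a different condition. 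If ``constraints'' means the lax constraints themselves, the equivalence is a tautology, but then knowing $\exists_K$ is strong monoidal between the convolution categories does not by itself show that the induced lax constraints on $\exists_KF$ are invertible; checking that is exactly the weighted-colimit calculation of Section~\ref{Mr}, so the detour through Theorem~\ref{HiT} buys nothing. Your fallback of rerunning that calculation is correct, but it is Section~\ref{Mr} verbatim rather than a consequence of the advanced viewpoint.

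The paper's proof takes a different and shorter path: it factors $\mathrm{Lan}_JF$ as the composite
\[
\CB \xrightarrow{\ \CB(J,1)\ } [\CA^{\mathrm{op}},\CV] \xrightarrow{\ \mathrm{colim}(-,F)\ } \CX
\]
and observes that each factor is strong monoidal --- the second by Corollary~\ref{Cor3} (which uses cartesianness of $\CA$ and strong monoidality of $F$), and the first because the convolution on $[\CA^{\mathrm{op}},\CV]$ is pointwise and $\CB(JA,B_1\star B_2)\cong\CB(JA,B_1)\otimes\CB(JA,B_2)$ by cartesianness of $\CB$, with the comonoidal hypothesis on $J$ supplying the coherence. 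This stays within the Section~\ref{Aav} framework, uses Corollary~\ref{Cor3} as the real workhorse, and avoids the monoid-in-convolution manoeuvre altogether.
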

 \begin{proof}
 Notice that $\mathrm{Lan}_JF$ is the composite of $\CB(J,1):\CB \lra [\CA^{\mathrm{op}},\CV]$ and 
 $\mathrm{colim}(-,F) :[\CA^{\mathrm{op}},\CV] \lra \CX$. 
 The first is strong monoidal by hypothesis on $J$.
 The second is strong monoidal by Corollary~\ref{Cor3}.
  \end{proof}

\begin{center}
--------------------------------------------------------
\end{center}

\appendix

\end{document}